\theoremstyle{theorem}
\newtheorem {theo}{Theorem}[section]
\newtheorem*{theo*}{Main Theorem}
\newtheorem {lemme}[theo]{Lemma}
\newtheorem*{lemme*}{Lemma}
\newtheorem {prop}[theo]{Proposition}
\newtheorem*{prop*}{Proposition}
\newtheorem*{cor*}{Corollary}
\newtheorem*{cor_proof*}{Corollary (of the proof)}
\newtheorem*{conjecture*}{Conjecture}
\theoremstyle{definition}
\newtheorem {defi}[theo]{Definition}
\newtheorem*{defi*}{Definition}
\newtheorem*{nota*}{Notation}
\theoremstyle{remark}
\newtheorem {remarque}[theo]{Remark}
\newtheorem*{remarque*}{Remark}
\newtheorem*{warning*}{Warning}
\newtheorem*{remarques*}{Remarks}
\newtheorem*{warnings*}{Warnings}
\newtheorem*{convention*}{Convention}
\newtheorem*{exemple*}{Example}
\newtheorem*{exemples*}{Examples}
\newtheorem*{question*}{Question}
\newtheorem*{questions*}{Questions}
\newtheorem*{fact*}{Fact}
\newtheorem*{acknowledgments}{Acknowledgments}
\def\LL{{\mathcal L}}
\def\N{{\mathds N}}
\def\PP{\mathcal{P}}
\def\R{{\mathds R}}
\def\Z{{\mathds Z}}
\def\2Z{{\fract{\Z}{2\Z}}}
\def\e{\varepsilon}
\newcommand{\fract}[2]{\hbox{\leavevmode
  \kern.1em \raise .25ex \hbox{\the\scriptfont0 $#1$}\kern-.1em }\big/
  {\hbox{\kern-.15em \lower .5ex \hbox{\the\scriptfont0 $#2$}} }}
\newcommand{\dessin}[2]{
  \vcenter{\hbox{\includegraphics[height=#1]{#2.pdf}}}}
\def\comp{\textnormal{c}}
\def\nF{\textnormal F}
\def\nR{\textnormal R}
\def\wGraph{w\mathcal{L}}
\def\wGraphL{\wGraph_n}
\def\wGraphLh{\wGraphL ^{\textnormal{sv}}}
\DeclareMathOperator{\Spun}{Spun}
\DeclareMathOperator{\Tube}{Tube}
\def\SV{\textnormal{SV}}
\def\TaH{\textnormal{TaH}}
\def\sv{\textnormal{sv}}
\begin{document} 

\title{Characterization of the reduced peripheral system of links} 
\author[B. Audoux]{Benjamin Audoux}
         \address{Aix Marseille Univ, CNRS, Centrale Marseille, I2M, Marseille, France}
         \email{benjamin.audoux@univ-amu.fr}
\author[J.B. Meilhan]{Jean-Baptiste Meilhan} 
\address{Univ. Grenoble Alpes, CNRS, Institut Fourier, F-38000 Grenoble, France}
	 \email{jean-baptiste.meilhan@univ-grenoble-alpes.fr}
%
%
\begin{abstract} 
The reduced peripheral system was introduced by Milnor in the fifties
for the study of links up to link-homotopy, {\it i.e.} 
up to isotopies and crossing changes within each link component.
However, for four or more components, this invariant does not yield a complete
link-homotopy invariant.
This paper provides two characterizations of links having the
  same reduced peripheral system: a diagrammatical one, in terms of link
  diagrams, seen as welded diagrams up to self-virtualization, and a 
  topological one, in terms of ribbon solid tori in 4--space up to ribbon link-homotopy. 
\end{abstract} 

\maketitle

\section*{Introduction}
A celebrated consequence of Waldhausen's theorem on Haken $3$--manifolds \cite{Waldo}
is that the fundamental group of the complement, endowed with a peripheral system, forms a complete invariant of links in the 3-sphere up to ambient isotopy. 
The peripheral system is the data, for each component of a link $L$ in $S^3$, of a pair of elements $\{m_i,l_i\}$ of $\pi_1(S^3\setminus L)$---a 
meridian and a preferred longitude---that generates the fundamental group of the corresponding boundary component of $S^3\setminus L$. 
Although rather intractable in practice, the peripheral system is 
nonetheless a fundamental link invariant, 
and it is natural to expect that some weaker equivalence relations than ambient isotopy could be classified by an appropriate adaptation of the  peripheral system. 
During the fifties, this 
has been the strategy of J.~Milnor in his attempt to classify links up to \emph{link-homotopy} \cite{Milnor}, that is up to homotopy deformations during which distinct connected components remain disjoint at all time.
In order to address 
this link-homotopy classification problem, 
Milnor introduced the \emph{reduced peripheral system}.  
Roughly speaking, the \emph{reduced fundamental group} $R\pi_1(L^\comp)$ of a link $L$ is the largest quotient of the fundamental group of the complement 
where any generator commutes with any of its conjugates; 
if $\{\mu_i,\lambda_i\}_i$ 
is a peripheral system for $L$, with image $\{m_i,l_i\}_i$ under the projection onto $R\pi_1(L^\comp)$, 
then a \emph{reduced peripheral system} of $L$ is $\{m_i,l_i N_i\}_i$, where $N_i$ is the normal subgroup of $R\pi_1(L^\comp)$ generated by $m_i$. 

The reduced peripheral system, however, only yields a complete link-homotopy invariant for links with at most $3$ components. 
The $4$--component case was tackled by J.~Levine \cite{Levine} only 40 years later, using a smaller normal subgroup for defining the reduced longitudes.
As a matter of fact, there exists a pair of $4$--component links, exhibited by J.R.~Hughes, with equivalent reduced peripheral systems but which are link-homotopically distinct \cite{Hughes93}. 
It seems still unknown whether Levine's peripheral system classifies links up to link-homotopy. 
In fact, this classification was achieved by N.~Habegger and X.S.~Lin by a rather different approach, which relies on representing links as the closure of string links \cite{HL}. 

In the present paper, we take advantage of the recently developped theory
of welded links to provide, in an elementary way, a four-dimensional topological characterization of the information captured by Milnor's reduced peripheral system:
\begin{theo*}\label{thm:C}
Let $L$ and $L'$ be two oriented links in the $3$-sphere. The following are equivalent: 
 \begin{itemize}
 \item[i.] $L$ and $L'$ have equivalent reduced peripheral systems;  
 \item[ii.] $L$ and $L'$ are \textrm{sv}-equivalent, as welded links; 
 \item[iii.] $\Spun^\bullet(L)$ and $\Spun^\bullet(L')$ are ribbon link-homotopic, as ribbon immersed solid tori.
 \end{itemize}
\end{theo*}

Here, \emph{welded links} are generalized link diagrams, where we allow for virtual crossings in addition to the usual crossings, regarded up to an extended set of Reidemeister moves. 
This is a sensible generalization in the sense that classical links inject into welded links, and that the fundamental group and (reduced) peripheral system naturally extends to this larger class of objects. 
Part ii then gives a diagrammatic characterization of the reduced peripheral systems of links, by regarding them as welded links via their diagrams, up to \emph{$\sv$--equivalence}, 
which is the equivalence relation  generated by the replacement of a classical  crossing involving two strands of a same component by a virtual one; 
this stresses in particular the fact that the $\sv$--equivalence is a
refinement of link-homotopy for classical links, see Remark \ref{rem:scc}. 
This characterization actually follows from a more general result, which classifies all welded links up to $\sv$--equivalence, see Theorem \ref{thm:weldedclassif}. 
Also, it follows that the non link-homotopic $4$--component links exhibited by Hughes in \cite{Hughes93} are $\sv$-equivalent; 
this is made explicit in Appendix \ref{onvamangerdeschips}.  

Part iii gives a topological characterization, in terms of $4$--dimensional topology. 
A classical construction dating back to Artin \cite{cEmilletueur} produces a knotted surface in $4$--space from a link in $3$--space by spinning it around some plane. 
By spinning as well projection rays from the link to the plane, this can be extended to a map $\Spun^\bullet$ producing \emph{ribbon-immersed solid tori}, 
\emph{i.e.} solid tori in $4$--space intersecting along only ribbon singularities. The \emph{ribbon link-homotopy} for such objects is 
a notion of link-homotopy within the realm of ribbon-immersed solid tori, which allows for the removal/insertion of such ribbon singularities 
inside the same connected component.
The reduced peripheral system for links hence appears in this way as an intrinsically $4$--dimensional invariant, rather than a $3$-dimensional one.\footnote{One might expect for this $4$-dimensional incarnation to be
in terms of knotted surfaces, we explain in Section
\ref{sec:surfaces} why this is not the case.}
As above, this characterization is obtained as a consequence of a more general result, characterizing the reduced peripheral system of welded links in terms of  $4$--dimensional topology; see Theorem \ref{thm:4-dimclassif}. 
It is thus noteworthy that our purely topological characterization $\textrm{i}\Leftrightarrow \textrm{iii}$ for classical links is actually 
obtained as an application of virtual/welded knot theory. 

\begin{acknowledgments}
The authors thank A.~Yasuhara for useful comments on an earlier version of this paper. 
\end{acknowledgments}

\section{The reduced peripheral system of classical and welded links}

\subsection{Welded links} \label{sec:combinatorics}
In this section, we review the theory of welded links and Gauss diagrams. 

\begin{defi} 
  \begin{itemize}
  \item[]
  \item An $n$-component \emph{welded diagram} is a planar immersion
    of $n$ ordered and oriented circles, whose singular set is a
    finite number of transverse double points, each double point being
    labelled 
    either as a \emph{positive or negative (classical) crossing}, or as a \emph{virtual
      crossing}:
    \[
    \begin{array}{ccccc}
      \dessin{1cm}{CPositive}&\hspace{1cm}&\dessin{1cm}{CNegative}&\hspace{1cm}&\dessin{1cm}{CVirtuel}\\
      \textrm{positive crossing}&&\textrm{negative crossing}&&\textrm{virtual crossing}
    \end{array}.
    \]
  \item  We denote by $\wGraphL$ the set of $n$-component welded
    diagrams up to the following \emph{welded moves}:
\[
  \begin{array}{c}
    \begin{array}{ccc}
   \dessin{1.5cm}{vR1_1}
    \stackrel{\textrm{vR1}}{\longleftrightarrow}  \dessin{1.5cm}{vR1_2} \, & \, 
    \dessin{1.5cm}{vR2_1} \stackrel{\textrm{vR2}}{\longleftrightarrow}  \dessin{1.5cm}{vR2_2}
   \, & \, 
   \dessin{1.5cm}{vR3_1} \stackrel{\textrm{vR3}}{\longleftrightarrow}  \dessin{1.5cm}{vR3_2}
   \end{array}
   \\
    \textrm{virtual Reidemeister moves}\\[0.4cm]
    \begin{array}{cc}
      \dessin{1.5cm}{vR3_3}\stackrel{\textrm{Mixed}}{\longleftrightarrow} \dessin{1.5cm}{vR3_4} \, & \, 
      \dessin{1.5cm}{OC_1}\ \stackrel{\textrm{OC}}{\longleftrightarrow}
      \dessin{1.5cm}{OC_2}
    \end{array}
    \\
    \textrm{Mixed and OC\footnotemark moves}
  \end{array}
\]
\footnotetext{Here, OC stands for \emph{over-commute}, as
    a strand is passing over a virtual crossing; note that 
    the corresponding \emph{under-commute} move is forbidden.}
and classical Reidemeister moves R1, R2 and R3, which are the three usual moves of classical knot theory. 
 Elements of $\wGraphL$ are called \emph{welded links}. 
  \end{itemize}
\end{defi}

A welded diagram with no virtual crossing is called \emph{classical}. 
It is well-known that this set-theoretical inclusion induces an injection of the set $\mathcal{L}_n$ of $n$-component classical link diagrams up to classical Reidemeister moves, 
into $\wGraphL$;
as pointed out in  Remark \ref{rem:inject}, this follows from the fact that the peripheral system is a complete link invariant. 

\medskip

An alternative approach to welded links, which is often more tractable in practice, is through the notion of Gauss diagrams.
  
\begin{defi}
An $n$-component \emph{Gauss diagram} is an abstract collection of $n$ ordered and oriented circles, together with disjoint signed arrows whose endpoints are pairwise disjoint points of these circles. 
For each arrow, the two endpoints are called \emph{head} and \emph{tail},
with the obvious convention that the arrow orientation goes from the tail to the head. 
\end{defi}

To a welded diagram corresponds a unique Gauss diagram, given by joining the two preimages of each classical crossing by an arrow, 
oriented from the overpassing to the underpassing strand and labelled by the crossing sign. 

\begin{defi}
Two Gauss diagrams are \emph{welded equivalent} if they are related by
a sequence of the following \emph{welded moves}:
\[
    \begin{array}{cccc}
      \dessin{1.5cm}{GR1_3} \stackrel{\textrm{R1}}{\longleftrightarrow}  \dessin{1.5cm}{GR1_2} 
\, \,  \,  &
      \,  \,  \,   \dessin{1.5cm}{GR2_1} \stackrel{\textrm{R2}}{\longleftrightarrow}  \dessin{1.5cm}{GR2_2} \,  \,  \, 
      & \,  \,  \, 
\dessin{1.5cm}{GR3_1} \stackrel{\textrm{R3}}{\longleftrightarrow}  \dessin{1.5cm}{GR3_2} \,  \,  \, 
      & \,  \,  \, 
\dessin{1.5cm}{GOC_1} \stackrel{\textrm{OC}}{\longleftrightarrow}  \dessin{1.5cm}{GOC_2},
    \end{array}
\]
where move R3 requires the additional sign condition that $\varepsilon_2\varepsilon_3=\tau_2\tau_3$, where
  $\tau_i=1$ if the $i^\textrm{th}$ strand (from left to right) is oriented upwards, and $-1$ otherwise.
\end{defi} 
As the notation suggests, these four moves are just the Gauss diagram analogues, using the above correspondance, 
of the three classical Reidemeister moves and the OC move for welded diagrams   
(the Gauss diagram versions of the virtual Reidemeister and Mixed
moves being trivial).
As a matter of fact, it is easily checked that
welded equivalence classes of Gauss diagrams are in one-to-one
correspondence with welded links. 

\begin{remarque}\label{rem:slide} 
 We will make use the following \emph{Slide} move
\[
    \begin{array}{c}
      \dessin{1.5cm}{slide_1} \stackrel{\textrm{Slide}}{\longleftrightarrow}  \dessin{1.5cm}{slide_2}
    \end{array}
\hspace{2cm}
 \dessin{1.5cm}{Slide_diag_1} \stackrel{\textrm{Slide}}{\longleftrightarrow}  \dessin{1.5cm}{Slide_diag_2}
\]
 \noindent which is easily seen to follow from the R2, R3 and OC moves.
Note that this is a Gauss diagram analogue of the Slide move on arrow diagrams \cite{arrow}. 
\end{remarque}
 
\medskip

In a welded diagram, a \emph{self-crossing} is a crossing where both preimages belong to the same component.   

\begin{defi}
  A \emph{self-virtualization} is a local move $\textrm{SV}$, illustrated
  below, which replaces a classical self-crossing by a virtual one. 
  The \emph{\textrm{sv}-equivalence} is the equivalence relation on welded diagrams generated by self-virtualizations. 
  We denote by $\wGraphLh$ the quotient of $\wGraphL$ under this relation.
     \[
   \begin{array}{ccc}
\dessin{1.2cm}{SV_1}\stackrel{\textrm{SV}}{\longleftrightarrow}\dessin{1.2cm}{SV_2}\stackrel{\textrm{SV}}{\longleftrightarrow}\dessin{1.2cm}{SV_3} 
& \hspace{1cm}& 
\dessin{1cm}{GSV_1}\stackrel{\textrm{SV}}{\longleftrightarrow}\dessin{1cm}{GSV_2}
    \end{array}
  \]
\end{defi}
At the Gauss diagram level, a self-crossing is represented by a \emph{self-arrow}, that is an arrow whose endpoints lie on the same component, and a self-virtualization move simply erases a self-arrow.

\begin{remarque}\label{rem:scc}
  The link-homotopy relation for classical links, as defined by Milnor, is generated by the self-crossing change, \emph{i.e.} the local move that exchanges the relative position of two strands of a same component. As the left-hand side of
  above picture suggests, a self-crossing change can be realized by two self-virtualizations, and the $\sv$--equivalence is thus a refinement of the link-homotopy relation for classical links. In \cite{ABMW}, self-virtualization was proven to actually extend classical link-homotopy for string links in the sense that two string-links are $\sv$--equivalent iff they are link-homotopic. Our main theorem, together with Hughes' counter-examples, shows that such an equivalence does not hold true for links.
\end{remarque}

We end this section with a normal form for Gauss diagrams up to self-virtualization.
\begin{defi}
  A Gauss diagram is \emph{sorted} if each circle
splits into two arcs, the $t$ and the $h$--arc,
 containing respectively tails only and heads only.
\end{defi}
\begin{remarque}\label{rem:SortedLabels}
  Up to OC moves, a sorted Gauss diagram $D$ is uniquely determined by
  the data of $n$ words
  $\Big\{\prod_{j=1}^{k_i}\mu_{s_{ij}}^{\varepsilon_{ij}}\Big\}_{\! i}$
  in the alphabet $\{\mu_1^{\pm1},\ldots,\mu_n^{\pm1}\}$, where the letter $\mu_{s_{ij}}^{\varepsilon_{ij}}$ 
  indicates that the $j\,$th head met on $C_i$ when running along its oriented $h$--arc is connected by an $\varepsilon_{ij}$--signed arrow to one of the tails on $C_{s_{ij}}$.
\end{remarque}

\begin{lemme}\label{lemma:Sorted}
Every Gauss diagram is $\sv$--equivalent to a sorted one.
\end{lemme}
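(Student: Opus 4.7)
The plan is to produce a sorted representative by repeatedly swapping an adjacent (head, tail) pair on the same circle using the Slide move from Remark \ref{rem:slide}, absorbing the resulting byproducts as self-arrows via \textrm{SV}.

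First, I would apply \textrm{SV} moves to erase every self-arrow of the given Gauss diagram $D$, so that, up to $\sv$-equivalence, one may assume every arrow of $D$ has endpoints on two distinct circles. Next, for each circle $C_i$, choose an arbitrary basepoint and direct our attention to the circular word of endpoints read along $C_i$ in its orientation. Define the \emph{defect} of $C_i$ to be the number of consecutive pairs $(x,y)$ of endpoints on $C_i$ such that $x$ is a head and $y$ is a tail, and let $d(D) := \sum_i d(C_i)$. By construction, $d(D)=0$ precisely when $D$ is sorted in the sense of Definition 1.11.

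The heart of the argument is the following reduction claim: whenever $d(D)>0$, there exist a circle $C_i$ and an adjacent head-tail pair $(h,t)$ on $C_i$ (with $h$ the head of some arrow $\alpha$ and $t$ the tail of some arrow $\beta$) to which a Slide move of Remark \ref{rem:slide} applies, swapping $h$ and $t$ on $C_i$. The Slide move may produce one additional arrow connecting the remote endpoint of $\alpha$ (a tail on some circle $C_j$) and the remote endpoint of $\beta$ (a head on some circle $C_k$); I would analyze the two cases $j=k$ and $j\neq k$ separately. When $j=k$, the new arrow is a self-arrow on $C_j$ and is erased by an \textrm{SV} move; when $j\neq k$, I would pair up the swap with its mirror application along the other circle, or introduce an auxiliary self-arrow (allowable by $\sv$-equivalence) and cancel via R2 so that the net effect reduces to the $h\leftrightarrow t$ swap alone. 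After this, $d$ decreases strictly, so iterating yields a sorted diagram after finitely many steps.

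The main obstacle is precisely the bookkeeping for the byproduct of the Slide move: one must verify that swapping a head-tail pair on $C_i$ can be realized \emph{modulo $\sv$-equivalence} without introducing defects elsewhere that offset the gain on $C_i$. The cleanest way to control this is to work with a lexicographic complexity measure, for instance $\bigl(d(D),\#\text{arrows}\bigr)$ under a suitable refinement, so that even if the Slide temporarily creates an extra non-self-arrow, one can show the complexity strictly decreases. With this technical lemma in hand, the process terminates at a sorted Gauss diagram $\sv$-equivalent to $D$, completing the proof.
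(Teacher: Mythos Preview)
Your reduction-by-defect strategy is natural, but the argument has a real gap in the case $j\neq k$. The byproduct arrow produced when you swap $h$ and $t$ on $C_i$ has its tail on $C_j$ and its head on $C_k$; when $j\neq k$ this is a genuine new non-self-arrow, and neither of your two proposed remedies works. An ``auxiliary self-arrow'' cannot cancel it by R2, since R2 requires the two arrows to be parallel (same endpoints, opposite signs), and a self-arrow and a non-self-arrow never have the same endpoints. The ``mirror application along the other circle'' is not well defined: the new arrow need not sit in a head-tail adjacency on $C_j$ or $C_k$, so there is no symmetric swap to perform, and even if there were it would spawn its own byproduct. Your closing appeal to a lexicographic complexity is not an argument either: the new arrow adds a head on $C_k$ and a tail on $C_j$, and there is no reason these land in the ``right'' region of those circles, so the global defect $d(D)$ can increase. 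You have correctly identified the obstacle but not overcome it.

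The paper's proof sidesteps this bookkeeping entirely by processing the circles \emph{one at a time} rather than trying to decrease a global quantity. On the current circle $C_i$, first delete all self-arrows by SV; then push each tail on $C_i$ past the heads using the $\TaH$ move (an R2 followed by a Slide). The crucial observation is that the two extra arrows created by $\TaH$ have their endpoints placed \emph{immediately adjacent} to the remote endpoints of the original arrows, which lie on circles other than $C_i$ (there being no self-arrows left on $C_i$). Hence they add no endpoints to $C_i$ at all, and on the other circles the new heads, resp.\ tails, land next to already existing heads, resp.\ tails, so any circle that was already sorted stays sorted. This locality is what makes the induction go through without any complexity measure.
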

\begin{figure}
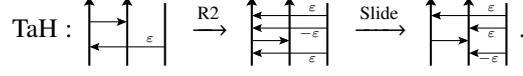

  \[
  \TaH :  \dessin{1.2cm}{TaH_1}\ \xrightarrow[]{\textrm{R2}}\ \dessin{1.2cm}{TaH_2}\ \xrightarrow[]{\textrm{Slide}}\ \dessin{1.2cm}{TaH_3}.
  \]  
  \caption{Tail across head move on Gauss diagrams}
  \label{fig:TaH}
\end{figure}
\begin{proof}
  Start with any given Gauss diagram, and choose any arbitrary order
  on the circles to sort them one by one as follows.
Using $\SV$ moves, remove first all self-arrows from the considered
circle, and then
gather all heads located on it using $\TaH$ (tail across head) moves, described in Figure \ref{fig:TaH}.
Since there is no self-arrow left on the considered circle, the two extra
arrows appearing in the latter moves won't have endpoints on the considered circle, and as their heads, resp. tails, are close to the already
existing arrow head, resp. tail, this won't unsort already sorted
components.
Note that these two extra arrows could happen to be self-arrows, but this does not conflict with the sorting procedure.
\end{proof}

\subsection{Welded link groups and peripheral systems}
\label{sec:wgroup}

Let $L$ be a welded diagram.

\begin{defi}
  \begin{itemize}
  \item[]
  \item The \emph{arcs} of $L$ are the maximal pieces of $L$ which do not underpass any classical crossing. 
An arc is hence either a whole component or a piece of strand which starts and ends at some (possibly the same) crossings; 
it might pass through some virtual crossings and overpass some
classical ones. At the level of Gauss diagrams, it corresponds to
portions of circles comprised between two heads.
\item The \emph{group of $L$}, denoted by $G(L)$, is defined by a Wirtinger-type presentation, where each arc yields a generator, 
and each classical crossing yields a relation, as follows: 
\[
  \dessin{1.8cm}{Wirtinger1}\ \leadsto\ \alpha^{-1}\beta\alpha\gamma^{-1}
  \hspace{1cm}
  \dessin{1.8cm}{Wirtinger2}\ \leadsto\ \alpha\beta\alpha^{-1}\gamma^{-1}\\[-0.1cm]
\]
  \end{itemize}
\end{defi}

Since virtual crossings do not produce extra generator or relation, it is clear that virtual Reidemeister moves and Mixed moves preserve the group presentation. 
It is also easily checked that the isomorphism class of this group is invariant under classical Reidemeister and OC moves, 
and is thus an invariant of welded links \cite{Kauffman,Satoh}.
If $L$ is a diagram of a classical link $\LL$, then $G(L)$ is merely the fundamental group of the complement of an open tubular neighborhood of $\LL$ in $S^3$; 
in this case, an arc corresponds to the topological meridian which positively enlaces it.
By analogy, arcs of welded diagrams can be seen as some combinatorial meridians, and in what follows, 
we will often blur the distinction between arcs/meridians of $L$ and the corresponding generators of $G(L)$.
We will also regularly, and sometimes implicitly, make use of the simple fact that two meridians of a same components are always conjugate. 
 \medskip

\begin{defi}
  \begin{itemize}
  \item[]
  \item A \emph{basing} of $L$ is a choice 
        of one meridian for each component of $L$. 
  \item For each $i$, the \emph{$i\,$th preferred longitude} of
    $L$ with respect to the basing $\{\mu_1,\ldots,\mu_n\}$ is the
    element $\lambda_i\in G(L)$ obtained as
    follows: when running along the $i\,$th component of $L$, starting
    at the arc labelled by $\mu_i$ and following the orientation,
    write $\omega^\varepsilon$ when passing under an arc labelled by $\omega$ at
    a classical crossing of sign $\varepsilon$, and finally write
    $\mu_i^{-k}$, where $k$ is the algebraic number of classical self-crossings
    in the $i\,$th component.
  \item  A \emph{peripheral system} for $L$ is the group $G(L)$ endowed with the choice of a basing and
         the data, for each $i$, of the $i\,$th preferred longitude.
  \end{itemize}
\end{defi}

When $L$ is a classical link, a basing is the choice of a topological meridian for each component, and the $i\,$th preferred longitude represents a parallel copy of the $i\,$th component having linking number zero with it.  
Hence the above definitions naturally generalize the usual notion of
peripheral system of links.

\medskip

Two peripheral systems $\big(G, \{(\mu_i,\lambda_i)\}_i\big)$
and $\big(G, \{(\mu'_i,\lambda'_i)\}_i\big)$ are \emph{conjugate}
  if, for each $i$, there exists $\omega_i\in G$ such that $\mu'_i=\omega_i^{-1}\mu_i\omega_i$ and
  $\lambda'_i=\omega_i^{-1}\lambda_i\omega_i$. 
Two peripheral systems $\left(G, \{(\mu_i,\lambda_i)\}_i\right)$ and
$\left(G', \{(\mu'_i,\lambda'_i)\}_i\right)$ are \emph{equivalent} if
there exists an isomorphism $\psi: G'\rightarrow G$ such that $\big(G, \{(\mu_i,\lambda_i)\}_i\big)$
  and $\big(G, \{(\psi(\mu'_i),\psi(\lambda'_i))\}_i\big)$ are conjugate.

The following is well-known, see for example \cite[Prop.6]{Kim}. 
\begin{lemme}
Up to conjugation, peripheral systems are well-defined for welded
  diagrams and yield, up to equivalence, a well-defined invariant
  of welded links.
\end{lemme}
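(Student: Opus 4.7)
The lemma splits into two claims: (i) well-definedness up to conjugation for a fixed welded diagram, and (ii) invariance up to equivalence under welded moves. The plan is to treat them in turn.

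For (i), given a welded diagram $L$ and two basings $\{\mu_i\}$, $\{\mu'_i\}$, I would construct for each $i$ an explicit conjugator $\omega_i$ relating the two peripheral systems. Define $\beta_i \in G(L)$ as the word obtained when traversing the $i$-th component from $\mu_i$ to $\mu'_i$, using exactly the same recording convention as in the longitude definition. An induction over the underpassings along this traversal, using the Wirtinger relations, shows that at each step the running arc label gets conjugated by the recorded letter; in particular $\mu'_i = \beta_i^{-1} \mu_i \beta_i$. Decomposing the longitude as $\lambda_i = \beta_i \gamma_i \mu_i^{-k_i}$, where $\gamma_i$ is read from $\mu'_i$ back to $\mu_i$, the longitude based at $\mu'_i$ is the cyclic shift $\lambda'_i = \gamma_i \beta_i (\mu'_i)^{-k_i}$, and a direct substitution using $(\mu'_i)^{-k_i} = \beta_i^{-1}\mu_i^{-k_i}\beta_i$ yields $\lambda'_i = \beta_i^{-1} \lambda_i \beta_i$. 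Hence $\omega_i = \beta_i$ works.

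For (ii), I would check each welded move separately. The virtual Reidemeister moves vR1, vR2, vR3 and the Mixed move affect neither arcs nor classical crossings and leave the Wirtinger presentation, basing, and longitude words literally unchanged. The OC move similarly preserves the arc structure and the Wirtinger relation at the adjacent classical crossing, since arcs only break at underpasses of classical crossings. For R2, the two new opposite-sign crossings contribute factors $\omega^{\pm 1}$ to the longitude which cancel, and their opposite signs cancel in the self-crossing count $k_i$. For R3, the standard Wirtinger analysis produces isomorphic presentations, the longitude contributions agree modulo the relations, and all three crossing signs are preserved. The delicate case is R1: for a positive self-crossing on component $i$, the new arc $y$ and its Wirtinger relation force $y = x$, where $x$ is the incoming arc, so the groups are isomorphic via $y \mapsto x$; as we traverse the $i$-th component the longitude acquires a factor $y$ at the loop and the correction becomes $\mu_i^{-k_i-1}$; writing $x = w^{-1}\mu_i w$ for the accumulated word $w$ and substituting, one finds $\lambda'_i = \mu_i \lambda_i \mu_i^{-1}$, while $\mu'_i = \mu_i$, so the new peripheral system is the conjugate of the old by $\omega_i = \mu_i^{-1}$. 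The negative R1 case is symmetric, with $\omega_i = \mu_i$.

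The main obstacle is precisely the R1 bookkeeping: verifying that the extra underpass factor inserted in the longitude word and the shift in the correction $\mu_i^{-k_i}$ combine into a \emph{simultaneous} conjugation of both meridian and longitude by the same element of $G(L)$. All other welded moves reduce to the standard Wirtinger invariance argument adapted combinatorially. If the basing happens to lie inside the local region where a move is performed, one first applies (i) to move it elsewhere, so that the above analysis applies.
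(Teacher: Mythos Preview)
Your proposal is correct and follows essentially the same two-step strategy as the paper. For part~(i), your explicit construction of the conjugator $\beta_i$ as the traversal word is exactly the computation the paper summarizes in one line (their $\omega_i$ is your $\beta_i$). For part~(ii), the approaches diverge slightly at R1: the paper places the basing \emph{at} the kink (taking $\beta$, resp.\ $\alpha$, as meridian in $L$, resp.\ $L'$), so that the longitude reads $\omega\alpha\alpha^{-k}$ versus $\omega\alpha^{-k+1}$ and the isomorphism preserves the peripheral pair on the nose; you instead keep the basing away from the local picture and obtain the pair conjugated by $\mu_i^{\pm1}$. Both are valid---the paper's choice avoids the final conjugation step, while yours has the advantage of treating all moves uniformly with a fixed external basing. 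Your remark that part~(i) lets one relocate the basing before analyzing a move is precisely what makes the paper's ``appropriate choice of basing'' legitimate.
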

\begin{proof}
Suppose that $\{(\mu_i,\lambda_i)\}_i$ is a peripheral system of
the welded diagram $L$, and let $\mu'_i$ be another choice of meridian for the $i\,$th
  component, yielding hence another preferred $i\,$th longitude $\lambda'_i$. 
Then $\mu'_i=\omega_i^{-1}\mu_i\omega_i$ for some $\omega_i\in G(L)$, and by definition 
$\lambda'_i=\omega_i^{-1}(\lambda_i \mu_i^k) \omega_i {\mu'_i}^{-k}$. 
But substituting $\mu'_i$ for $\omega_i^{-1}\mu_i\omega_i$ in $\lambda'_i$ then gives 
$\lambda'_i=\omega_i^{-1}\lambda_i \mu_1^k \omega_i \omega_i^{-1} \mu_i^{-k} \omega_i = \omega_i^{-1}\lambda_i  \omega_i$. 
This proves that the peripheral system of $L$ is uniquely determined up to conjugation. 

Using this fact, it is then an easy exercise to check that equivalence classes of peripheral systems are well-defined for welded links, \emph{i.e.} that they are invariant under welded and classical Reidemeister moves. 
More precisely, by an appropriate choice of basing, one can check that each classical Reidemeister move induces an isomorphism of the groups of the diagrams which preserves each preferred longitude; the argument
is even simpler for welded Reidemeister moves.\\[-0.5cm]
{\parpic[r]{$\begin{array}{c}
   L\ \dessin{1.5cm}{R1_1} \stackrel{\textrm{R1}}{\longleftrightarrow}  \dessin{1.5cm}{R1_2}L'
   \end{array}$}

   As an elementary illustration, let us consider two welded diagrams $L$ and $L'$ which
  differ by an R1 move as shown on the right.
The generators $\alpha$ and $\beta$ of $G(L)$ shown in the figure satisfy $\alpha=\beta$, so $G(L)$ and $G(L')$ are clearly isomorphic. 
Pick $\beta\in G(L)$, resp. $\alpha\in G(L')$, as meridian for the depicted component of $L$, resp. $L'$. 

}
\noindent Then the corresponding preferred longitude of $L$ is of the form
$\omega\alpha\alpha^{-k}$ for some $\omega\in G(L)$ and some $k\in \mathbb{Z}$, while the
corresponding preferred longitude of $L'$ reads $\omega\alpha^{-k+1}$, since $L'$ contains one less positive self-crossing. 
Hence the above isomorphism from $G(L)$ to $G(L')$ preserves the peripheral system.
\end{proof}

\begin{remarque}\label{rem:inject}
The peripheral system of classical links is a complete invariant \cite{Waldo}. 
Since this invariant extends to welded links, in the sense that the above-defined invariant coincides with the usual peripheral system for classical links, this shows that classical links inject into welded links \cite{Kauffman,GPV}.  
\end{remarque}

\subsection{Reduced group and reduced peripheral system}

As before, let us consider a welded diagram $L$. 

\begin{defi}
For a group $G$ given with a finite generating set $X$, the \emph{reduced group of $G$}, denoted by $\nR G$, is the quotient of $G$ 
by its normal subgroup generated by all elements $[\zeta,\omega^{-1}\zeta \omega]$, where $\zeta\in X$ and $\omega\in G$. 
In particular, we define the \emph{reduced group of $L$} as the reduced group $\nR G(L)$ of $G(L)$ with respect to its
Wirtinger generators. 
\end{defi}

Note that $\nR G(L)$ is the largest quotient of $G(L)$
where any meridian commutes with any of its conjugates. Since any two
meridians of a same component are conjugate elements, it can also be
defined as the quotient of $G(L)$ by the normal subgroup generated by
the elements $[\mu_i,\omega^{-1}\mu_i \omega]$ for all $\omega\in G(L)$, where $\{\mu_i\}_i$ is a fixed basing for $L$. 

\begin{convention*} 
In the rest of this paper, we shall use greek letters with tilda for elements in the group of a welded diagram, 
and use the same letters, but without the tilda, to denote the corresponding elements in the reduced group. 
In particular, we respectively denote by $\mu_i$ and $\lambda_i$ the images in $\nR G(L)$ of any meridian  $\widetilde{\mu}_i $ and  longitude $\widetilde{\lambda}_i$ in $G(L)$.
\end{convention*}

\begin{defi}
The \emph{reduced peripheral system} for $L$ is the data 
$$\big( \nR G(L),\{(\mu_i,\lambda_i.N_i)\}_i\big),$$ 
associated to a peripheral system $\big(G(L),\{(\widetilde{\mu}_i,\widetilde{\lambda}_i)\}_i\big)$
where, for each $i$,  $\lambda_i.N_i$ denotes the coset of $\lambda_i$ with
respect to $N_i$, the normal subgroup generated by the $i\,$th reduced
meridian $\mu_i$. 
Two reduced peripheral systems are \emph{conjugate} 
if they come from conjugate peripheral systems; and they are 
\emph{equivalent} if there is a group isomorphism sending one 
to a conjugate of the other.
\end{defi}

As explained in the introduction, Milnor introduced the reduced peripheral system 
for classical links, and showed that it is a link-homotopy invariant. We have the following generalization. 
\begin{lemme}\label{lem:lhinv}
Up to equivalence, the reduced peripheral system is a well-defined
  invariant of welded links up to $\sv$--equivalence. 
\end{lemme}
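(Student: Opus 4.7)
My plan splits into two parts: first, showing that the reduced peripheral system is well-defined as an invariant of welded links; second, showing invariance under the SV move.

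For the first part, I would simply invoke the preceding lemma. It provides, for every welded or classical Reidemeister move relating two diagrams $L$ and $L'$, an isomorphism $G(L)\to G(L')$ that sends meridians to meridians and preferred longitudes to preferred longitudes, up to conjugation. Since the reduced group is obtained from $G(L)$ by killing the commutators $[\zeta,\omega^{-1}\zeta\omega]$ with $\zeta$ a Wirtinger meridian, any such isomorphism automatically descends to the reduced groups; passing to the coset modulo the normal closure $N_i$ of $\mu_i$ is then formal, so reduced longitude cosets correspond as well, and conjugacy of peripheral systems descends to conjugacy of reduced peripheral systems.

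The main content is the SV case. Suppose that $L$ and $L'$ differ at a single classical self-crossing $c$ of sign $\varepsilon$ belonging to the $i$-th component $C_i$, virtualized in $L'$. Let $\widetilde{\alpha}$ be the over-meridian at $c$, and let $\widetilde{\beta},\widetilde{\gamma}$ be the incoming and outgoing under-meridians, so that all three are meridians of $C_i$; after virtualization, the arcs of $\widetilde{\beta}$ and $\widetilde{\gamma}$ merge into a single arc of $L'$ with meridian $\widetilde{\beta}'$. I would define a homomorphism $\varphi\colon G(L)\to \nR G(L')$ by sending $\widetilde{\alpha}$ to its counterpart in $L'$, both $\widetilde{\beta}$ and $\widetilde{\gamma}$ to $\widetilde{\beta}'$, and every other Wirtinger generator of $L$ to its obvious counterpart in $L'$. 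All Wirtinger relations away from $c$ are preserved tautologically; the only check is at $c$, where the relation $\widetilde{\gamma}=\widetilde{\alpha}^{-\varepsilon}\widetilde{\beta}\widetilde{\alpha}^{\varepsilon}$ becomes $[\widetilde{\alpha},\widetilde{\beta}']=1$, and this holds in $\nR G(L')$ because $\widetilde{\alpha}$ and $\widetilde{\beta}'$ are conjugate meridians of a common component. Since $\varphi$ sends meridians to meridians, it descends to $\nR G(L)\to \nR G(L')$, and a symmetric construction yields an inverse, whose well-definedness this time relies on the dual identity $\widetilde{\gamma}=\widetilde{\beta}$ holding in $\nR G(L)$.

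It remains to match the reduced peripheral systems through $\varphi$. Choosing basings of $L$ and $L'$ that correspond via the obvious identification of arcs, the preferred $j$-th longitudes for $j\neq i$ map to one another verbatim, while the two preferred $i$-th longitudes differ only by the extra factor $\widetilde{\alpha}^{\varepsilon}$ read when passing under $c$ in $L$ and the compensating factor $\widetilde{\mu}_i^{-\varepsilon}$ coming from the self-crossing correction. Both factors are meridians of $C_i$ (or their inverses), so the resulting discrepancy, even after conjugation by the intermediate portions of the longitude reading, lies in the normal closure $N_i$ of $\mu_i$ by normality; hence the cosets modulo $N_i$ coincide. The hard part is really the bookkeeping in the previous paragraph, whose crux is the observation that the single Wirtinger relation lost when virtualizing $c$ is precisely reinstated, in the reduced group, by the commutativity of meridians of a common component.
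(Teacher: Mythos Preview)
Your argument is correct and follows essentially the same approach as the paper: both proofs reduce to the SV move, identify the key algebraic fact that the Wirtinger relation $\gamma=\alpha^{-\varepsilon}\beta\alpha^{\varepsilon}$ collapses to $\gamma=\beta$ in the reduced group (since $\alpha$ and $\beta$ are conjugate meridians of the same component), and then observe that the discrepancy between the $i$-th longitudes lies in $N_i$ because the extra letters $\alpha^{\varepsilon}$ and $\mu_i^{-\varepsilon}$ are meridians of $C_i$. The only difference is presentational: the paper fixes the basepoint at $\widetilde\alpha$ and writes the longitude explicitly as $\widetilde\omega\,\widetilde\alpha\,\widetilde\zeta\,\widetilde\alpha^{-k}$, carrying out the coset computation by hand, whereas you argue more abstractly via the generator-by-generator map $\varphi$ and its inverse.
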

\begin{proof} 
Since equivalent peripheral systems obviously yield equivalent
  reduced peripheral systems, it suffices to prove the invariance under a single SV move. 
Pick a self-crossing $s$ of some welded diagram $L$, and denote by
$L_s$ the diagram obtained by replacing $s$ by a virtual crossing:
   \[
   \begin{array}{c}
L\ \dessin{1.4cm}{SV_1_W} \hspace{.3cm} \stackrel{\textrm{SV}}{\longleftrightarrow} \hspace{.3cm} \dessin{1.4cm}{SV_2_W}\ L_s 
    \end{array}.
  \]
Consider the three generators $\widetilde{\alpha},\widetilde{\beta},\widetilde{\gamma}$ of $G(L)$ involved in $s$, as
shown above.
Since the meridians $\widetilde{\alpha}$, $\widetilde{\beta}$ and $\widetilde{\gamma}$ all belong to the same
  component, there are $\widetilde{\omega},\widetilde{\zeta}\in G(L)$ such that
  $\widetilde{\beta}=\widetilde{\omega}^{-1}\widetilde{\alpha}\widetilde{\omega}$ and $\widetilde{\alpha}=\widetilde{\zeta}^{-1}\widetilde{\gamma}\widetilde{\zeta}$.
For $L$, the Wirtinger relation at $s$ is $\widetilde{\gamma}=\widetilde{\alpha}^{-1}\widetilde{\beta}\widetilde{\alpha}$; hence, we have that $\gamma=\alpha^{-1}\omega^{-1}\alpha \omega \alpha\equiv \omega^{-1}\alpha \omega=\beta$ holds in $\nR G(L)$, which shows that $\nR G(L)$ is isomorphic to $\nR G(L_s)$. 

It remains to show that this isomorphism preserves the reduced peripheral system. 
Pick $\widetilde{\alpha}\in G(L)$ as meridian $\widetilde{\mu}$ for the component of $L$ containing $s$; the corresponding preferred longitude is given by 
$\widetilde{\lambda}=\widetilde{\omega} \widetilde{\alpha}\widetilde{\zeta} \widetilde{\alpha}^{-k}$ for some integer $k$. Take the meridian $\widetilde{\mu}_s$ of the corresponding component of $L_s$ to be represented by $\widetilde{\alpha}$ again, so that the 
preferred longitude is given by $\widetilde{\lambda}_s=\widetilde{\omega}\widetilde{\zeta} \widetilde{ \alpha}^{-k+1}$. 
Then the isomorphism $\nR G(L)\rightarrow \nR G(L_s)$ maps $\mu$ to $\mu_s$, and the equality
\[
\lambda = \omega \alpha\zeta \alpha^{-k} \equiv \omega[\zeta,\alpha]\alpha\zeta \alpha^{-k} =
\omega\zeta \alpha\zeta^{-1}\alpha^{-1}\alpha\zeta \alpha^{-k}=\omega\zeta \alpha^{-k+1} \textrm{ (mod $N$)}
\]
shows that $\lambda.N$ is mapped to $\lambda_s.N_s$, where $N\subset
  \nR G(L)$ and $N_s\subset
  \nR G(L_s)$ denote the normal subgroups generated by
$\alpha$. 
This handles one version of the SV move, but the other one is strictly similar.
\end{proof}
\begin{remarque}
Since we are overall working modulo the normal subgroup generated by the meridian $\alpha$, the above sequence of equalities for the longitude $\lambda$ could be slightly simplified. It seems however instructive to point out that one only needs to consider this normal subgroup in the second equality of this sequence;
this identifies precisely where  this equivalence is needed to have the desired invariance property. 
\end{remarque}

In particular, the reduced fundamental group is hence invariant under self-virtualization. Combining this with the sorted form given in Lemma \ref{lemma:Sorted}, 
we obtain the following presentation. 

\begin{lemme}\label{lem:Rpres}
The reduced fundamental group $\nR G(L)$ has the following presentation 
 \[
\nR G(L) = \big\langle \mu_1,\ldots, \mu_n\,\,\big| \,\,  [\mu_i,\lambda_i], \,
[\mu_i,\omega^{-1}\mu_i \omega],  \textrm{ for all $i$ and for all $\omega\in  F(\mu_i)$}
\big\rangle,
\]
 where $F(\mu_i)$ denotes the free group on $\{\mu_i\}_i$. 
\end{lemme}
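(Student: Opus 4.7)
The plan is to reduce to a convenient diagrammatic normal form, compute the Wirtinger presentation in that form, and then observe that the reduction relations can be re-indexed over a free group.

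First, by Lemma \ref{lem:lhinv}, the reduced peripheral system is an invariant of welded links up to sv-equivalence, so I may replace $L$ by any sv-equivalent diagram. Applying Lemma \ref{lemma:Sorted}, I can therefore assume that (the Gauss diagram of) $L$ is sorted, which in particular means that $L$ has no self-arrow. Using the description of Remark \ref{rem:SortedLabels}, encode $L$ by the words $w_i=\prod_{j=1}^{k_i}\mu_{s_{ij}}^{\varepsilon_{ij}}$, where $k_i$ is the number of heads on the $i$-th component.

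The second step is to show that for a sorted diagram the Wirtinger presentation collapses to
\[
G(L)=\big\langle \mu_1,\ldots,\mu_n\,\big|\,[\mu_i,\lambda_i],\ i=1,\ldots,n\big\rangle.
\]
Because all tails on the $i$-th component lie on the single $t$-arc, every tail-arc coincides with a preferred meridian $\mu_{s_{ij}}$. On the $h$-arc of the $i$-th component, label the successive arcs $\mu_i=x_{i,0},x_{i,1},\ldots,x_{i,k_i-1}$; the Wirtinger relation at the $j$-th head reads $x_{i,j}=\mu_{s_{ij}}^{-\varepsilon_{ij}}x_{i,j-1}\mu_{s_{ij}}^{\varepsilon_{ij}}$, so by induction $x_{i,j}=v_{i,j}^{-1}\mu_i v_{i,j}$ with $v_{i,j}=\prod_{\ell\le j}\mu_{s_{i\ell}}^{\varepsilon_{i\ell}}$. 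Eliminating all auxiliary generators $x_{i,j}$ ($j\ge 1$), the relation at the $k_i$-th head, which closes up the $h$-arc to the $t$-arc, becomes precisely $w_i^{-1}\mu_i w_i=\mu_i$, i.e. $[\mu_i,w_i]=1$. Since the diagram is sorted and self-arrow-free, the preferred longitude is $\lambda_i=w_i$ (no $\mu_i^{-k}$ correction is needed), which establishes the claimed presentation of $G(L)$.

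The third step is to pass to the reduced group. Adding the defining reduction relations $[\mu_i,\omega^{-1}\mu_i\omega]=1$ for all $\omega\in G(L)$ yields a presentation of $\nR G(L)$ with the desired form except that $\omega$ ranges over $G(L)$ rather than over the free group $F(\mu_1,\ldots,\mu_n)$. But the $\mu_i$ generate $G(L)$, so every $\omega\in G(L)$ is the image of some word $\tilde\omega\in F(\mu_1,\ldots,\mu_n)$; the relation $[\mu_i,\omega^{-1}\mu_i\omega]$ in $G(L)$ is then literally the image of $[\mu_i,\tilde\omega^{-1}\mu_i\tilde\omega]$ under the projection $F(\mu_1,\ldots,\mu_n)\twoheadrightarrow G(L)$. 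Hence letting $\omega$ run over $F(\mu_1,\ldots,\mu_n)$ imposes the same set of relations, giving the stated presentation.

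The only non-routine step is the second one: one must check that after the elimination of the auxiliary meridians $x_{i,j}$, the \emph{only} surviving relations are the closure relations $[\mu_i,\lambda_i]$, and in particular that no further identification between distinct $\mu_i$'s is produced. This is where sortedness is essential, as it guarantees that each Wirtinger relation involves a tail-arc, hence one of the chosen generators $\mu_{s_{ij}}$, and that the $j$-indexed chain of relations on each component closes up into a single commutator relation rather than into a more complicated word.
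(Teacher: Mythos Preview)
Your proof is correct and follows essentially the same route as the paper: reduce to a sorted Gauss diagram via Lemmas~\ref{lemma:Sorted} and~\ref{lem:lhinv}, eliminate the auxiliary Wirtinger generators to obtain $G(L)=\langle\mu_1,\dots,\mu_n\mid[\mu_i,\lambda_i]\rangle$, and then impose the reduction relations (your third step, re-indexing $\omega$ over the free group, is exactly what the paper means by ``naturally arise when taking the reduced quotient''). One small imprecision: sortedness alone does not force the absence of self-arrows---a self-arrow with tail on the $t$-arc and head on the $h$-arc is permitted by the definition, and the $\TaH$ moves in the proof of Lemma~\ref{lemma:Sorted} may even create such arrows---but since you are working up to sv-equivalence you can simply delete any remaining self-arrows after sorting, and then your claim $\lambda_i=w_i$ (with no $\mu_i^{-k}$ correction) is valid.
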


\begin{proof}
  Suppose first that $L$ corresponds to a sorted Gauss diagram. 
  The Wirtinger-type presentation for its welded link group provides, for every component $C_i$, 
  a generator $\mu_i$ corresponding to the $t$--arc, and a bunch of generators $\mu_i^j$ lying on the $h$--arc. 
  Any of the latter appears in exactly two relations which are of the form $\mu_i^j=\mu^{\pm1}_{i_1}\mu_i^{j-1}\mu^{\mp1}_{i_1}$ 
  and $\mu_i^{j+1}=\mu^{\pm1}_{i_2}\mu_i^j\mu^{\mp1}_{i_2}$, setting $\mu_i^0:=\mu_i=:\mu_i^{r_i}$ where $r_i$ is the number of 
  heads on $C_i$; generators $\mu_i^j$ can hence be successively eliminated, ending with relations $ [\mu_i,\lambda_i]$ only. 
  Hence the presentation  
  \[
   G(L) = \big\langle \mu_1,\ldots, \mu_n\,\,\big| \,\,  [\mu_i,\lambda_i], \, \textrm{ for all $i$ } \big\rangle.
  \]
  Relations $[\mu_i,\omega^{-1}\mu_i \omega]$ then naturally arise when taking the reduced quotient. 
  The general case, where $L$ does not necessarily correspond to a sorted Gauss diagram, then follows readily from Lemmas \ref{lemma:Sorted} and \ref{lem:lhinv}.    
\end{proof}

\begin{remarque}\label{rem:Presentation->Diagram}
  Starting with a sorted Gauss diagram $D$, Lemma \ref{lem:Rpres}
  provides a presentation for the associated reduced fundamental group
  whose generators $\{\mu_i\}_i$ correspond to the $t$--arcs, and whose relations
  make these generators commute with their own conjugates, as well as
  with their associated longitude. Moreover, as words in the
  generators, these longitudes are directly given by the words
  associated to $D$ in Remark \ref{rem:SortedLabels}. Conversely, any
  such presentation provides a word representative for each longitude
  and hence describes, up to OC moves, a unique sorted Gauss diagram.
\end{remarque}

\begin{remarque}
  Lemma \ref{lem:Rpres} is to be be compared with \cite[Thm.~4]{Milnor2}, where Milnor gives a similar presentation for the nilpotent quotients of the group of a classical link. 
  Actually, Milnor's argument being purely algebraic, the proof of \cite[Thm.~4]{Milnor2} applies verbatim to the case of welded links, 
  meaning in particular that the nilpotent quotient $G(L) / \Gamma_k G(L)$ has a presentation 
  $\big\langle \mu_1,\ldots, \mu_n\,\,\big| \,\,  \Gamma_k F(\mu_i),\,  [\mu_i,\lambda_i]  \textrm{ for all $i$}\big\rangle$. 
  In fact, Milnor's proof can be adapted with minor adjustments to give an alternative proof of Lemma \ref{lem:Rpres}.
\end{remarque}

\section{Diagrammatic characterization of the reduced peripheral system}\label{sec:2}

This section is devoted to the proof of the following result, which readily implies the equivalence $\textrm{i}\Leftrightarrow \textrm{ii}$ in our main theorem, and which more generally classifies welded links up to $\sv$--equivalence. 

\begin{theo}\label{thm:weldedclassif}
 Two welded links are $\sv$--equivalent if and only if they have isomorphic reduced peripheral systems.
\end{theo}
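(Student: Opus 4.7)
The non-trivial direction is to show that equivalent reduced peripheral systems imply $\sv$-equivalence, the converse being already Lemma \ref{lem:lhinv}. The overall strategy is to reduce to the setting of sorted Gauss diagrams, where the reduced peripheral system admits an essentially tautological description in terms of words, and then to realize each algebraic ambiguity diagrammatically as a sequence of $\sv$-moves.

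First, by Lemma \ref{lemma:Sorted}, up to $\sv$-equivalence we may assume that both welded links $L$ and $L'$ are represented by sorted Gauss diagrams. By Remark \ref{rem:SortedLabels}, such a diagram is determined, up to OC moves, by an $n$-tuple of words $(\widetilde{\lambda}_1,\ldots,\widetilde{\lambda}_n)$ in the free group $F(\widetilde{\mu}_1,\ldots,\widetilde{\mu}_n)$; the generators $\widetilde{\mu}_i$ form a natural basing associated to the $t$-arcs, and the $\widetilde{\lambda}_i$'s are precisely the corresponding preferred longitudes of $L$. By Lemma \ref{lem:Rpres} and Remark \ref{rem:Presentation->Diagram}, the reduced peripheral system of $L$ is then faithfully encoded by the explicit presentation of $\nR G(L)$ with generators $\mu_i$, conjugate-commutation relators $[\mu_i,\omega^{-1}\mu_i\omega]$, and longitude relators $[\mu_i,\lambda_i]$, together with the cosets $\lambda_i.N_i$.

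Suppose now that the reduced peripheral systems of $L$ and $L'$ are equivalent. After a basing change---which at the diagrammatic level amounts to cyclically shifting reading starting points on the $t$-arcs and is realized through welded and OC moves---one may assume that the isomorphism $\psi:\nR G(L')\to \nR G(L)$ realizing the equivalence sends $\mu'_i$ to $\mu_i$ and the coset $\lambda'_i.N'_i$ to $\lambda_i.N_i$ for each $i$. The remaining task is to transform the word tuple $(\widetilde{\lambda}'_i)_i$ into $(\widetilde{\lambda}_i)_i$ by a finite sequence of elementary word operations, each realizable as an $\sv$-equivalence on sorted Gauss diagrams. The three building blocks are:
\begin{itemize}
\item[(a)] insertion or deletion of an isolated letter $\mu_i^{\pm 1}$ in $\widetilde{\lambda}_i$, which diagrammatically is the insertion or removal of a self-arrow, i.e.\ an SV move;
\item[(b)] replacement of a subword $\mu_j$ by a conjugate $\omega^{-1}\mu_j\omega$ within some $\widetilde{\lambda}_i$, realized through the $\TaH$ move of Figure \ref{fig:TaH} followed by SV-removal of the auxiliary self-arrows it creates;
\item[(c)] cyclic permutation of $\widetilde{\lambda}_i$, reflecting the choice of a starting head on the $h$-arc of $C_i$, which is absorbed within the basing-conjugation freedom of the reduced peripheral system.
\end{itemize}

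The main obstacle is to prove that operations (a)--(c) generate the equivalence relation on word tuples induced by equivalence of reduced peripheral systems. For this one exploits the rigid form of the presentation in Lemma \ref{lem:Rpres}: since all defining relators are either conjugate-commutators of the generators or single longitude commutators, any isomorphism between two such presentations that respects the basings decomposes, via Tietze transformations, into a sequence of moves each of which either inserts or removes a conjugate-commutator---absorbed by operation (b)---or modifies a longitude word by multiplication by an element of $N_i$, which is a product of conjugates of $\mu_i^{\pm 1}$ and hence absorbed by a combination of operations (a) and (b). Since $\sv$-equivalence is transitive and was already used to pass to sorted forms, this yields the desired $\sv$-equivalence between $L$ and $L'$.
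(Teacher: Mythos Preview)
Your overall architecture matches the paper's---pass to sorted Gauss diagrams, read off word tuples, and realize word-level relations by diagrammatic moves---but the execution has two genuine gaps.

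\medskip

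\textbf{The basing step.} You assert that after ``a basing change\ldots which at the diagrammatic level amounts to cyclically shifting reading starting points on the $t$-arcs'' one may assume $\psi(\mu'_i)=\mu_i$. This is not correct. For a sorted diagram the available basing changes only conjugate $\mu'_i$ by partial longitudes of the $i$th component, not by arbitrary elements; yet the conjugating elements $\omega_i$ coming from the equivalence of reduced peripheral systems are arbitrary in $\nR G(L)$. The paper does \emph{not} handle this by a basing change: it genuinely modifies the diagram $D'$ via R2 followed by a sequence of $\TaH$ moves (Figure~\ref{fig:Equivalence->Isomorphism}, top) to realize each elementary conjugation of $(\mu'_i,\lambda'_i)$ by a single generator $\mu'_j$.

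\medskip

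\textbf{The missing longitude relator.} This is the more serious gap. Once one has arranged $\psi(\mu'_i)=\mu_i$ and $\psi(\lambda'_i)\in\lambda_i.N_i$, the two longitude words represent the same element of $\nR G(L)$, and by Lemma~\ref{lem:Rpres} they differ by free reduction, by the conjugate-commutators $[\mu_j,\zeta^{-1}\mu_j\zeta]$, \emph{and by the relators $[\mu_j,\lambda_j]$}. Your last paragraph only lists the first two (``inserts or removes a conjugate-commutator'' and ``multiplication by an element of $N_i$''), but the third is essential and independent: it allows, inside $\lambda_i$ with $i\neq j$, to replace an occurrence of $\mu_j$ by $\lambda_j^{-1}\mu_j\lambda_j$. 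This is neither a conjugate-commutator of $\mu_j$ nor an $N_i$-modification. The paper devotes a separate step to it (relation~iii), realized by sliding the tail of the relevant arrow once around the entire $j$th circle using $\TaH$ moves (Figure~\ref{fig:ConjugatingMu}), with an additional argument to handle the ambiguity in the word representative of $\lambda_j$. Your operations (a)--(c) simply do not produce this move, so as written they do not generate the required equivalence on word tuples.

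\medskip

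A smaller point: your description of operation~(b) is also off. A $\TaH$ move creates two new arrows parallel to the arrow whose head is being crossed; these are not self-arrows in general, so ``SV-removal of the auxiliary self-arrows it creates'' does not make sense, and $\TaH$ alone only conjugates by the specific meridian of the crossed head, not by an arbitrary~$\omega$.
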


For convenience, we will adopt here the Gauss diagram point of view,
fix a number $n\in\N^*$ of components and set $\nF(\mu)$ the free group
over elements $\mu_1,\ldots,\mu_n$. 

\begin{proof}
 The ``only if'' part of Theorem \ref{thm:weldedclassif} was proved in Lemma \ref{lem:lhinv}.
Conversely, let $L$ and $L'$ be two welded diagrams
 with equivalent reduced peripheral systems $\big( \nR
 G(L),\{(\mu_i,\lambda_i.N_i)\}_i\big)$ and $\big( \nR
 G(L'),\{(\mu'_i,\lambda'_i.N'_i)\}_i\big)$. Using Lemma
 \ref{lemma:Sorted}, we may assume that $L$ and $L'$ are both given by
 sorted Gauss diagrams $D$ and $D'$. Following Remark
 \ref{rem:Presentation->Diagram}, the strategy will be to apply welded
 and $\SV$ moves on $D'$ so that the presentations for the reduced fundamental group associated to $D$ and $D'$ are the same, meaning that $D$ and $D'$ are the same up to OC moves. To keep notation light, we
 will also denote by $D'$ all the sorted Gauss diagrams successively obtained by
 modifying $D'$. 

\begin{figure}
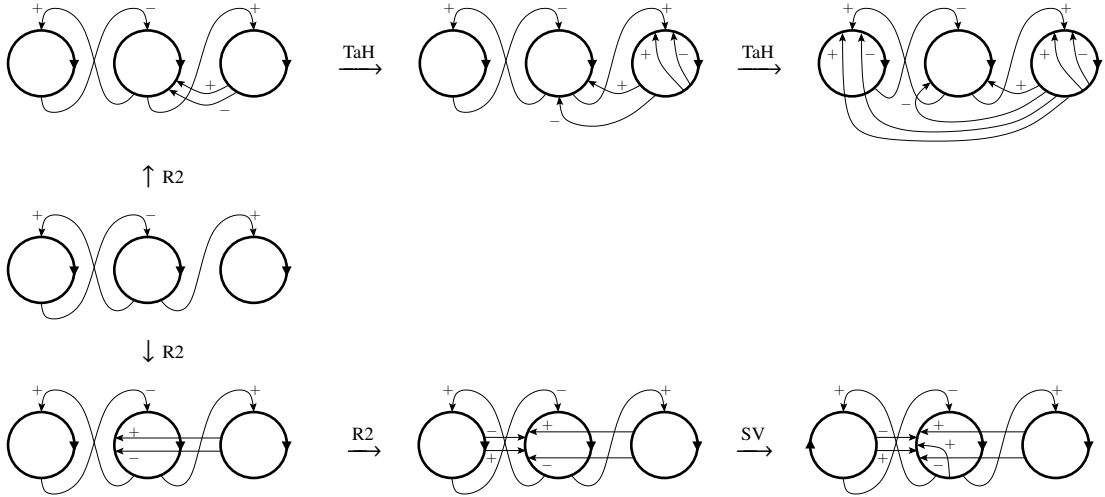

  \[
  \begin{array}{cc}
\dessin{2cm}{Basing2}& 
\xrightarrow[]{\textrm{TaH}}\ \dessin{2cm}{Basing3}\ \xrightarrow[]{\textrm{TaH}}\ \dessin{2cm}{Basing4}
  \\[1cm]
\phantom{\textrm{\scriptsize R2}}\uparrow \textrm{\scriptsize R2}&  \\[.1cm]
\dessin{2cm}{Start}&            \\[-.2cm]
\phantom{\textrm{\scriptsize R2}}\downarrow \textrm{\scriptsize R2}&\\[.1cm]
\dessin{2cm}{Longitude2} & 
\xrightarrow[]{\textrm{R2}}\ \dessin{2cm}{Longitude3}\ \xrightarrow[]{\textrm{SV}}\ \dessin{2cm}{Longitude4}
\\[-.1cm]
  \end{array}
\]
  \caption{Making the equivalence an isomorphism}
  \label{fig:Equivalence->Isomorphism}
\end{figure}

To begin, we first modify $D'$ so that the
isomorphism $\psi:\nR G(L')\to\nR G(L)$ sends each pair 
$(\mu'_i,\lambda'_i)$ to $(\mu_i,\lambda_i)$. 
Algebraically, transforming $\psi(\mu'_i,\lambda'_i)$ into $(\mu_i,\lambda_i)$ can be achieved in two steps: 
\begin{enumerate}
\item since $\psi(\mu'_i)$ is a conjugate of $\mu_i$, 
  perform a sequence of ``elementary conjugations'', each replacing both $\mu'_i$ and $\lambda'_i$ by their conjugate under
${\mu_j'}^{\e}$, for some index $j\neq i$ and sign $\e$;
\item since $\psi(\lambda'_i)$ is an
  element of $\lambda_i.N_i$, multiply $\lambda'_i$ by 
  an appropriate product of conjugates of ${\mu'_i}^{\pm1}$.
\end{enumerate}
These two steps have to be realized at the diagrammatical level:
\begin{enumerate}
\item  for each
elementary conjugation, use R2 to add two parallel arrows going from the $j\,$th $t$--arc to the 
starting extremity of the $i\,$th $t$--arc, and then pull the head of the
$(-\e)$--labelled arrow along the $i\,$th $t$--arc using the $\TaH$ move given in Figure \ref{fig:TaH}. 
This yields an equivalent Gauss diagram which is still sorted, and
which realizes the desired elementary conjugation. See the upper half of Figure \ref{fig:Equivalence->Isomorphism} for an example where $\mu'_2$ and $\lambda'_2$ are conjugated by $\mu'_3$, the components being numbered from left to right;
\item for adding a conjugate $g{\mu'_i}^{\pm1}g^{-1}$ of ${\mu'_i}^{\pm1}$ to $\lambda'_i$,
use repeatedly R2 to add the trivial word $gg^{-1}$ to the $i$th
longitude of $D'$, and then a single SV move to produce the desired word $g{\mu'_i}^{\pm1}g^{-1}$. See the lower half of Figure \ref{fig:Equivalence->Isomorphism} for an example where the conjugate of $\mu'_2$ by ${\mu_1'}^{-1}\mu_3'$ is introduced in $\lambda'_2$, the components being numbered from left to right. 
\end{enumerate}

We have thus realized the isomorphism $\psi:\nR G(L')\to\nR G(L)$ which sends each $\mu'_i$ to $\mu_i$, and each $\lambda'_i$ to $\lambda_i$. 
As a matter of fact, we can now identify $\mu'_i$ with $\mu_i$, and according to the presentation given in Lemma \ref{lem:Rpres}, $\psi(\lambda'_i)$ and $\lambda_i$ differ, as words in the $\mu_j$'s, by a sequence of the following moves:
  \begin{enumerate}
  \item[i.] $\mu_j^{\pm1}\mu_j^{\mp1}\leftrightarrow 1$;
  \item[ii.] $\omega\mu^{\pm1}_j\rightarrow \mu^{\pm1}_j\omega$ where $\omega$ is
    of the form $\zeta^{-1}\mu^{\pm1}_j\zeta$ with $\zeta\in \nF(\mu)$;
  \item[iii.] $\mu^{\pm1}_j\rightarrow \omega^{-1}\mu^{\pm1}_j\omega$ where
    $\omega$ is any representative in $\nF(\mu)$ of the element $\lambda_j$.
  \end{enumerate}
But each of these moves can be realized by modifying $D'$.  Relations
i correspond indeed to R2 moves. 
Relations ii can be handled exactly as in the proof of \cite[Lem. 4.26]{ABMW}. 
For relations iii, consider first the particular representative
$\omega_0$ of $\lambda_j$ given by $D'$ following Remark \ref{rem:SortedLabels}. 
At the level of $D'$, 
the term $\mu^{\pm1}_j$ in relation iii corresponds to an arrow $a$
whose tail sits on the $j\,$th circle; moving this tail along the
whole circle component, against the orientation, 
does conjugate $\mu^{\pm1}_j$ by $\omega_0$. Indeed, using
$\TaH$ moves, the tail of $a$ will cross
every head on its way at the cost of conjugating the head of $a$ with
the desired arrows; see Figure \ref{fig:ConjugatingMu} for an example where the $\mu_2$ factor in $\lambda_1'$ is conjugated by $\lambda_2'$, the components being numbered from left to right. For other representatives of $\lambda_j$, we note that
they differ from $\omega_0$ by a sequence of the following moves: 
\begin{itemize}
\item[i'.] $\mu_k^{\pm1}\mu_k^{\mp1}\leftrightarrow 1$;
\item[ii'.] $\zeta^{-1}\mu^{\pm1}_k\zeta\rightarrow \mu^{\pm1}_k$ where
    $\zeta$ is any element in
    $\nF(\mu)$.
\end{itemize}
Before sliding the tail of $a$ over the circle component, $D'$ should hence be modified so
that the slide operation does conjugate by the right word in
$\nF(\mu)$. Again, relations i' can be realized using R2 moves. For relation ii', use the Slide move to remove by pairs the
arrows of $\zeta$ and $\zeta^{-1}$ away from the circle. 
Then move the tail of $a$ along this loop, and perform the relations i' and ii' backwards. 
\begin{figure}
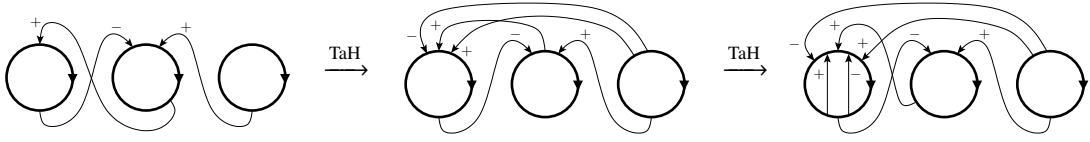

  \[
\dessin{2cm}{Conjugate1}\ \xrightarrow[]{\textrm{TaH}}\ \dessin{2cm}{Conjugate2}\ \xrightarrow[]{\textrm{TaH}}\ \dessin{2cm}{Conjugate3}
\]
  \caption{Conjugating meridian factors in longitudes}
  \label{fig:ConjugatingMu}
\end{figure}
\end{proof}

\section{A topological characterization of the reduced peripheral system}\label{sec:3}

Welded links are closely related to \emph{ribbon knotted tori}  and \emph{ribbon solid tori} in
$S^4$, and the characterization of classical links having same
  reduced peripheral systems given by Theorem \ref{thm:weldedclassif} can be recasted in terms of
  $4$--dimensional topology.

\subsection{The enhanced Spun map} 

Given a classical link $L\subset\R^3$, a well-known procedure to
construct ribbon knotted tori in $4$--space is to take the \emph{Spun} of $L$:
consider a plane $\PP$ which is disjoint from a $3$--ball containing
$L$, and spin $L$ around $\PP$ inside $\R^4\supset\R^3$. The 
  result is a union of knotted tori, which we denote by $\Spun(L)$. If the
  projection $D(L,\PP)$ of $L$ onto the plane $\PP$ is regular, then
  spinning as well the orthogonal projection rays from $L$ to $\PP$
  provides immersed solid tori whose boundary is $\Spun(L)$ and whose singularities are so-called
  \emph{ribbon} disks, corresponding to
  the crossings of $D(L,\PP)$. 
Of course, this \emph{ribbon filling} depends on the choice of plane
$\PP$, and more precisely on the diagram $D(L,\PP)$, which may by
changed by some sequence of Reidemeister moves.
But for each Reidemeister move, there is an associated singular diagram, that is a singular plane $\PP_s$, 
and spinning $L$ around $\PP_s$ provides some singular ribbon filling which can be infinitesimally desingularized into the spun of one or the other side of the Reidemeister move. 
This leads to the following definition, which settles a notion of
(singular) ribbon solid tori. 
\begin{defi}
  Let $\varphi:M\to S^4$ be an immersed $3$--dimensional manifold.
  Let $D$ be a connected component of the singular set of $\varphi(M)$
  contained in an open $4$--ball $B\subset S^4$. 
  We say that $D$ is a \emph{ribbon singularity}:\footnote{For the sake of exactitude, we provide here explicit formulas for the singularities, but informal descriptions follow in Remark \ref{rem:InformalRibbonSingularities}.} 
  \begin{itemize}
  \item \emph{of type $0$} if $\varphi^{-1}(B)$ is the disjoint union  $B_1\sqcup B_2$ of
    two $3$--balls and there is a local system of coordinates for
    $B\cong\R^4$ such that $\left\{
  \begin{array}{l}
    \varphi(B_1)=\Big\{\big(t,r\cos(s),r\sin(s),0\big)\ \big|\
    t,s\in\R,r\in[0,2]\Big\}\\[.2cm]
    \varphi(B_2)=\Big\{\big(0,r\cos(s),r\sin(s),t\big)\ \big|\
    t,s\in\R,r\in[0,1]\Big\}
  \end{array}
\right.$;
  \item \emph{of type $2$} if $\varphi^{-1}(B)$ is the disjoint union $B_1\sqcup B_2$ of
    two $3$--balls and there is a local system of coordinates for
    $B\cong\R^4$ such that $\left\{
  \begin{array}{l}
    \varphi(B_1)=\Big\{\big(t,r\cos(s),r\sin(s),t^2\big)\ \big|\
    t,s\in\R,r\in[0,2]\Big\}\\[.2cm]
    \varphi(B_2)=\Big\{\big(t,r\cos(s),r\sin(s),-t^2\big)\ \big|\
    t,s\in\R,r\in[0,1]\Big\}
  \end{array}
\right.$;
  \item \emph{of type $3$} if $\varphi^{-1}(B)$ is the disjoint union  $B_1\sqcup B_2\sqcup
    B_3$ of
    three $3$--balls and there is a local system of coordinates for
    $B\cong\R^4$ such that $\left\{
  \begin{array}{l}
    \varphi(B_1)=\Big\{\big(t,r\cos(s),r\sin(s),0\big)\ \big|\
    t,s\in\R,r\in[0,2]\Big\}\\[.2cm]
    \varphi(B_2)=\Big\{\big(t,r\cos(s),r\sin(s),t\big)\ \big|\
    t,s\in\R,r\in[0,1]\Big\}\\[.2cm]
    \varphi(B_3)=\Big\{\big(t,r\cos(s),r\sin(s),-t\big)\ \big|\
    t,s\in\R,r\in\big[0,\frac12\big]\Big\}
  \end{array}
\right.$;
  \item \emph{of type $\SV$} if $\varphi^{-1}(B)$ is the disjoint
    union  $B_1\sqcup B_2$ of two $3$--balls, $B_1$ and $B_2$ belongs
    to the same connected component of $M$, and there is a local system
    of coordinates for $B\cong\R^4$ such that $\left\{
  \begin{array}{l}
    \varphi(B_1)=\Big\{\big(r,t,s,0\big)\ \big|\
    t,s\in\R,r\in\R_-\Big\}\\[.2cm]
    \varphi(B_2)=\Big\{\big(0,r\cos(s),r\sin(s),t\big)\ \big|\
    t,s\in\R,r\in[0,1],r\Big\}
  \end{array}
\right.$.
  \end{itemize}
\end{defi}

\begin{remarque}\label{rem:InformalRibbonSingularities}
  In all four cases, the ribbon singularity $D$ corresponds to the disk $\Big\{\big(0,r\cos(s),r\sin(s),0\big)\ \big|\
    s\in\R,r\in[0,1]\Big\}$.
Type $0$ corresponds to two solid
    tubes, one being smaller than the other, intersecting
    transversally; these are the usual ribbon singularities. Type $2$
    corresponds to two solid tubes, one being smaller than the other,
    intersecting tangentially; these occur when spinning a link
    around a plane on which the link projects with two tangential
    strands. Type $3$
    corresponds to three solid tubes of increasing width,
    intersecting simultaneously and transversally; these occur when spinning a link
    around a plane on which the link projects with a triple
    point. Type $\SV$ differs from type $0$ in 
    that one
    preimage of the singular disk
    lies on the boundary of $M$
    instead of its interior, and in that the two preimages belong to the
    same solid torus; 
    these occur when performing the link-homotopy which pushes at once a usual
    ribbon (self) singularity through the boundary of $M$. Note that 
    a type $1$ seems to be missing here, which would correspond to spinning a link
    around a plane on which the link projects with a cusp, but this
    does not introduce any new kind of ribbon singularity.
\end{remarque}

\begin{defi}
  \emph{Ribbon solid tori} are immersed solid tori in $S^4$ whose
  singular locus is made of ribbon singularities of type
  $0$. \emph{Generalized ribbon solid tori} are immersed solid tori in $S^4$ whose
  singular locus is made of ribbon singularities of type $0$, $2$ and
  $3$. \emph{Self-singular ribbon solid tori} are immersed solid tori in
  $S^4$ whose singular locus is made of ribbon singularities of type
  $0$ and $\SV$. 

  We say that two (generalized) ribbon solid tori are \emph{equivalent} if there is
  a path among generalized ribbon solid tori connecting them, and we
  say that they are \emph{ribbon link-homotopic} if there is path
  among generalized and self-singular ribbon solid tori connecting them. 
\end{defi}

Adding the spun of projection rays in the above definition of the $\Spun$ map provides a well-defined map $\Spun^\bullet$ from classical links to generalized ribbon soli tori. 
The following result is the topological
characterization of the reduced peripheral system given by the equivalence $\textrm{i}\Leftrightarrow \textrm{iii}$ in our main theorem. 
\begin{theo}\label{tralalilala}
  Two classical links $L$ and $L'$ have isomorphic reduced peripheral
 systems if and only if $\Spun^\bullet(L)$ and $\Spun^\bullet(L')$ are
 ribbon link-homotopic.
\end{theo}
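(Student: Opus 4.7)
The plan is to factor the statement through the diagrammatic classification of Theorem \ref{thm:weldedclassif} by extending the Spun construction to the welded category. Given a welded diagram $D$ realised as a planar immersion, I spin both the immersed curve and its orthogonal projection rays around a plane $\PP$ disjoint from a ball containing $D$. Virtual crossings are mere artefacts of the $2$-dimensional presentation: once there is an additional dimension available, the two strands meeting at a virtual crossing can be separated, so they produce no $4$-dimensional singularity; only the classical crossings survive as type-$0$ ribbon singularities. The output $\Spun^\bullet(D)$ is a generalized ribbon solid torus, and specialising to classical diagrams recovers the map in the statement.

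I would then verify that $\Spun^\bullet$ descends to welded links up to equivalence of generalized ribbon solid tori. Virtual Reidemeister, Mixed and OC moves correspond to ambient isotopies of the planar projection and leave $\Spun^\bullet(D)$ unchanged up to isotopy. Each classical Reidemeister move is realised by a generic one-parameter deformation of the planar projection whose singular projection spins into, respectively, a cusp (which, as noted in Remark \ref{rem:InformalRibbonSingularities}, does not introduce a new kind of ribbon singularity), a type-$2$ singularity, and a type-$3$ singularity. Consequently $\Spun^\bullet$ yields a well-defined map from welded links to equivalence classes of generalized ribbon solid tori.

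To handle the $\sv$--equivalence, I would show that a single SV move exchanging a classical self-crossing with a virtual one is realised by a path passing through exactly one type-$\SV$ singularity. At the classical self-crossing, $\Spun^\bullet(D)$ has a type-$0$ ribbon singularity whose two preimage disks lie on the same solid torus component; the path pushes the inner preimage disk radially outwards until it exits through the boundary torus, becoming a virtual crossing in the new diagram. The instant at which this preimage disk meets the boundary is precisely the local model of a type-$\SV$ singularity. Hence $\sv$--equivalent welded diagrams have ribbon link-homotopic $\Spun^\bullet$ images, and combining with Theorem \ref{thm:weldedclassif} yields the direction i $\Rightarrow$ iii. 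For the converse, I would recover the reduced peripheral system from the $4$-dimensional data: a Wirtinger-type presentation reads off $\pi_1(S^4\setminus \Spun^\bullet(L))$ from the type-$0$ singularities, identifying it with $G(L)$, while the boundary tori supply canonical meridians and preferred longitudes. Type-$2$ and type-$3$ singularities affect this presentation only up to Tietze moves, and each type-$\SV$ singularity imposes a relation of the form $[\mu,\omega^{-1}\mu\omega]=1$ where $\mu$ is a meridian of the self-singular component. A ribbon link-homotopy thus induces an isomorphism of reduced peripheral systems.

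The main obstacle is the precise matching between $\sv$--equivalence and ribbon link-homotopy carried out in the third paragraph. On the one hand, one must verify that the local model for pushing an inner ribbon disk across the boundary genuinely realises a single SV move, with no auxiliary degeneration. On the other hand, one must show that an arbitrary ribbon link-homotopy can be put in general position so that each passage through a type-$\SV$ singularity corresponds to one SV move on an underlying welded diagram, all remaining events being isotopies or passages through type-$2$ or type-$3$ singularities. This is a $4$-dimensional ``movie move'' analysis analogous to the one underlying the Tube map for ribbon knotted tori, and constitutes the technical heart of the more general result Theorem \ref{thm:4-dimclassif}.
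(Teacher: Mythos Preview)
Your overall strategy---factoring through Theorem \ref{thm:weldedclassif} and relating the $4$-dimensional side to welded diagrams up to $\sv$--equivalence---is exactly the paper's. The execution differs in two respects worth noting.

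First, rather than ``extending $\Spun^\bullet$ to welded diagrams'' (where literal spinning of a planar immersion with virtual crossings is awkward to make precise), the paper introduces a separate, locally defined map $\Tube^\bullet$ on generalized welded diagrams, assigning to each crossing type the corresponding ribbon singularity type; it then simply observes, following Satoh, that $\Tube^\bullet$ and $\Spun^\bullet$ agree on classical links. This sidesteps the need to say what spinning a virtual crossing means.

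Second, for the converse (iii $\Rightarrow$ i), the paper does not argue directly that the reduced peripheral system is a ribbon link-homotopy invariant of the solid tori, as you propose. Instead it invokes that $\Tube^\bullet$ is a \emph{bijection} between welded diagrams up to self-virtualization and generalized ribbon solid tori up to ribbon link-homotopy, citing an adaptation of \cite[Prop.~3.7]{Aud_HdR}. With bijectivity in hand, Theorem \ref{thm:4-dimclassif} is an immediate corollary of Theorem \ref{thm:weldedclassif}, and Theorem \ref{tralalilala} follows. The movie-move analysis you flag as the ``main obstacle'' is precisely the content of that cited result; your direct Wirtinger-presentation argument for the converse is a legitimate alternative route, but the paper avoids it entirely by appealing to bijectivity.
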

The proof is given in the next section. 
As for the diagrammatic characterization given in Section \ref{sec:2},
this will follow from a more general result, Theorem \ref{thm:4-dimclassif}, characterizing the reduced peripheral system of welded links in terms of $4$--dimensional topology. 

\subsection{The enhanced Tube map}

In this section, we prove Theorem \ref{tralalilala}, using the so-called $\Tube$ map. 
Recall from \cite{Satoh} that Satoh's generalization of Yajima's $\Tube$ map is defined from welded links to ribbon knotted $2$-tori, and that for any welded link $L$, $\Tube(L)$ actually comes with a canonical ribbon filling. 
In order to fully record this ribbon filling in the $\Tube$ map, and to connect with the $\Spun^\bullet$ map, we are led to the following notion.  
\begin{defi}
  We define \emph{generalized welded diagrams} as diagrams with cups and the following
  kind of crossings:
\[
\begin{array}{c}
  \dessin{1.2cm}{CClassical}\\
  \textrm{classical}
\end{array}
\hspace{1cm}
\begin{array}{c}
  \dessin{1.2cm}{CTangent}\\
  \textrm{tangential}
\end{array}
\hspace{1cm}
\begin{array}{c}
  \dessin{1.2cm}{CTriple}\\
  \textrm{triple}
\end{array}
\hspace{1cm}
\begin{array}{c}
  \dessin{1.2cm}{CVirtuel}\\
  \textrm{virtual}
\end{array}.
\]
Then classical Reidemeister moves are replaced by a path of diagrams
going through the corresponding cusp, tangential ou triple
point. Other welded moves are still locally allowed.

We define \emph{self-singular welded diagrams} as diagrams with the following
  kind of crossings, where the two strands involved in a semi-virtual crossing belong to a same component:
\[
\begin{array}{c}
  \dessin{1.2cm}{CClassical}\\
  \textrm{classical}
\end{array}
\hspace{1cm}
\begin{array}{c}
  \dessin{1.2cm}{CVirtuel}\\
  \textrm{virtual}
\end{array}
\hspace{1cm}
\begin{array}{c}
  \dessin{1.2cm}{CSemiVirtuel}\\
  \textrm{semi-virtual\footnotemark}
\end{array}.
\]
\footnotetext{Semi-virtual crossings were already introduced in
  \cite{GPV} in connection with finite type invariants of virtual
  knots.}
\emph{Self-virtualization} is defined for generalized welded diagrams
as the equivalence relation generated by the local moves turning a
semi-virtual crossing into either a classical crossing, or a virtual
one as follows:
\[
\dessin{1.2cm}{CClassical} \longleftrightarrow
\dessin{1.2cm}{CSemiVirtuel}
\longleftrightarrow  \dessin{1.2cm}{CVirtuel}.
\]
\end{defi}
Following \cite[Sec. 3.2]{Aud_HdR}, one can then define a map
\[
\Tube^\bullet:\frac{\big\{\textrm{generalized welded
    diagrams}\big\}}{\textrm{self-virtualization}}\to\frac{\big\{\textrm{generalized
  ribbon solid tori}\big\}}{\textrm{ribbon
  link-homotopy}}
\]
which, respectively, associates ribbon singularities of type $0$, $2$,
$3$ and $\SV$ to classical, tangential, triple and semi-virtual crossings 
and connects these various singularities by pairwise disjoint $3$-balls, as prescribed by the welded diagram. 
It is then a straightforward
adaptation of \cite[Prop. 3.7]{Aud_HdR} to prove that $\Tube^\bullet$
is one-to-one.

As a direct corollary of Theorem \ref{thm:weldedclassif}, we obtain the following alternative characterization of the reduced peripheral system, which holds for all welded links. 
\begin{theo}\label{thm:4-dimclassif}
 Two welded links $L_1$ and $L_2$ have isomorphic reduced peripheral
 systems if and only if $\Tube^\bullet(L_1)$ and $\Tube^\bullet(L_2)$ are
 ribbon link-homotopic.
\end{theo}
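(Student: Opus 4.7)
The plan is to deduce this theorem directly from Theorem \ref{thm:weldedclassif} and from the bijectivity of the $\Tube^\bullet$ map stated in the preceding paragraph. More precisely, I would chain together three equivalences: isomorphic reduced peripheral systems $\Leftrightarrow$ $\sv$--equivalence as welded links (by Theorem \ref{thm:weldedclassif}) $\Leftrightarrow$ equality in the source of $\Tube^\bullet$ (tautologically, once we check how welded diagrams embed into generalized welded diagrams) $\Leftrightarrow$ ribbon link-homotopy of the images (by injectivity of $\Tube^\bullet$).

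The first step is the content of Theorem \ref{thm:weldedclassif}, so nothing new is needed there. The second step is essentially a matter of unpacking definitions: an ordinary welded diagram is a generalized welded diagram with no cusp, tangential or triple crossings, and the $\sv$--equivalence of welded diagrams (replacing a classical self-crossing by a virtual one) factors through a semi-virtual self-crossing, which is exactly the self-virtualization relation defined for generalized welded diagrams. Hence two welded diagrams are $\sv$--equivalent iff they represent the same class in the quotient $\{\textrm{generalized welded diagrams}\}/\textrm{self-virtualization}$ that serves as the source of $\Tube^\bullet$.

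The third step is where the topology enters: one invokes the fact, established just before the theorem as an adaptation of \cite[Prop.~3.7]{Aud_HdR}, that $\Tube^\bullet$ is one-to-one. Combined with the previous steps, this yields the ``only if'' direction (isomorphic reduced peripheral systems force $\sv$--equivalence, hence equal $\Tube^\bullet$-image up to ribbon link-homotopy) and the ``if'' direction (ribbon link-homotopy of the images forces, by injectivity, $\sv$--equivalence of the welded diagrams, hence isomorphic reduced peripheral systems).

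The main potential obstacle is the second step, namely making sure that no subtlety is hidden in passing from welded diagrams to generalized welded diagrams: one must confirm that the embedding is indeed injective on $\sv$--equivalence classes and that the generalized moves (paths through cusps, tangential and triple points, plus semi-virtual crossings) do not identify two welded classes that were originally distinct. Once this is verified---which is essentially built into the definition of $\Tube^\bullet$ and its bijectivity statement---the theorem follows as a formal corollary, with no further computation required.
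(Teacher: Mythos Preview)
Your proposal is correct and follows exactly the paper's approach: the theorem is stated there as a direct corollary of Theorem~\ref{thm:weldedclassif} together with the injectivity of $\Tube^\bullet$ established in the preceding paragraph. Your extra care about the embedding of ordinary welded diagrams into generalized ones is a reasonable point to raise, but as you note it is absorbed by the one-to-one statement for $\Tube^\bullet$, so no additional argument is needed.
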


Theorem \ref{tralalilala} follows from this results. 
Indeed, as essentially pointed out by Satoh in \cite{Satoh}, it is
clear from the definition of the $\Spun^\bullet$ map that, starting
with a diagram $D$ of a classical link $L$, the ribbon solid tori
$\Spun^\bullet(L)$ consists of ribbon singularities which are
connected by $3$-balls as combinatorially prescribed by $D$:
$\Spun^\bullet(L)$ and $\Tube^\bullet(L)$ are hence equivalent. 

\subsection{Link-homotopy of ribbon surfaces in $4$-space}\label{sec:surfaces}

The original versions of the $\Spun$ and $\Tube$ maps produce ribbon $2$--tori, 
which are just the boundary of some ribbon solid tori, rather than $3$-dimensional objects.  
Obviously, any ribbon link-homotopy between two ribbon solid tori
induces a usual link-homotopy between their 
boundaries. Building on this remark, it follows that:
\begin{prop}
  If two classical links $L_1$ and $L_2$ have isomorphic reduced
  peripheral systems, then $\Spun(L_1)$ and $\Spun(L_2)$ are link-homotopic.
\end{prop}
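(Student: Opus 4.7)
My plan is to derive this proposition as a direct consequence of Theorem \ref{tralalilala} together with the remark immediately preceding the statement. The first step is to invoke Theorem \ref{tralalilala}: since $L_1$ and $L_2$ have isomorphic reduced peripheral systems, the enhanced spuns $\Spun^\bullet(L_1)$ and $\Spun^\bullet(L_2)$ are ribbon link-homotopic, which by definition produces a continuous path $\{M_t\}_{t\in[0,1]}$ of immersed solid tori in $S^4$ whose singular sets consist only of ribbon singularities of types $0$, $2$, $3$, and $\SV$.

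The second step is to take boundaries along this path, setting $\Sigma_t := \p M_t$, so that $\Sigma_0 = \Spun(L_1)$ and $\Sigma_1 = \Spun(L_2)$. The content lies in verifying that $\{\Sigma_t\}$ is a genuine link-homotopy, i.e.\ that components of $\Sigma_t$ arising from distinct components of $M_t$ remain disjoint at every time. I would do this by a case-by-case inspection of the four explicit local models given in the definition of ribbon singularities. For types $0$, $2$ and $3$, both preimages of the singular disk lie in the \emph{interior} of $M_t$, so the local bounding cylinders of the model tubes are easily seen to be pairwise disjoint near the singular disk, and $\Sigma_t$ is locally a disjoint union of embedded surfaces. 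For type $\SV$, one preimage sits on $\p M_t$ and produces a self-intersection of $\Sigma_t$; crucially, the definition requires $B_1$ and $B_2$ to lie in the \emph{same} connected component of $M_t$, so the resulting self-intersection of $\Sigma_t$ is internal to a single component of the surface, which is precisely what link-homotopy of surfaces in $4$-space permits.

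The main obstacle, I expect, is verifying that the boundary surface behaves as expected near a type $\SV$ singularity: namely that $\Sigma_t$ undergoes a transverse self-intersection of $2$-surfaces in $4$-space at that instant, rather than any more degenerate singular behavior. This is essentially the rigorous counterpart of the informal description of type $\SV$ as ``pushing a ribbon self-singularity through the boundary of $M$''; once extracted from the explicit local coordinates, it shows that a type $\SV$ event corresponds, at the boundary level, exactly to a self-crossing move on the immersed $2$-torus. Combining all four cases, the path $\{\Sigma_t\}$ keeps distinct components disjoint throughout and so realizes a link-homotopy between $\Spun(L_1)$ and $\Spun(L_2)$.
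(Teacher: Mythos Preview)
Your argument is correct and follows exactly the route the paper takes: the paper simply states that ``obviously, any ribbon link-homotopy between two ribbon solid tori induces a usual link-homotopy between their boundaries'' and combines this with Theorem~\ref{tralalilala}, and your proposal is a careful unpacking of that one-line remark via the local models. One small imprecision: at a type $\SV$ instant the boundary self-intersection is actually a circle rather than an isolated transverse double point, but this is immaterial since link-homotopy only requires that \emph{distinct} components remain disjoint, which your case analysis establishes.
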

It is hence tempting to hope for the converse to hold true: this would 
give a topological characterization of the reduced peripheral system in terms of spun surfaces up to link-homotopy. 
However, this is not the case. 
There is indeed a known global move on welded links, related to the torus eversion in $S^4$, under
which the $\Spun$ map is invariant, and this move
transforms every classical link into its \emph{reversed image}, which is the
mirror image with reversed orientation (see \cite{Blake} or
\cite[Prop. 2.7]{IndianPaper}). 
Furthermore, 
it can be checked
that the (reduced) peripheral system of a reversed image is given
from the initial one by just inverting the longitudes.
It follows easily
from these two observations
that, for instance, the positive and negative Hopf links have non-equivalent reduced peripheral systems,
  whereas their spuns are isotopic, hence link-homotopic.
As a consequence, keeping track of the ribbon filling is mandatory to
preserve (reduced) peripheral systems, and Theorem \ref{tralalilala} is in this sense optimal.

\appendix
\section{Hughes' counterexample}\label{onvamangerdeschips}

As mentioned in the introduction, the fact that the reduced peripheral
system of classical links is not a complete link-homotopy invariant
was made explicit by J.~Hughes in \cite{Hughes93}. 
There, a pair of $4$-component links is given, which have isomorphic reduced peripheral systems but are not link-homotopic; this latter fact is proved using Levine's refinement of Milnor invariants developed in \cite{Levine}. 
These two links, $H_1$ and $H_2$, are given by the closures of the
following pure braids, oriented from left to right:
  \begin{gather*}
H_1:\dessin{1.8cm}{H1};\\
H_2:\dessin{1.8cm}{H2}.
  \end{gather*}

Our main theorem implies that, although not link-homotopic, the links $H_1$ and $H_2$ are $\sv$--equivalent. 
This fact, however, is rather difficult to verify by hand, and we outline in this appendix the method that we used for this verification.  

We make use of the theory of \emph{arrow calculus} developed in \cite{arrow}, which is in some sense a `higher order Gauss diagram' theory. 
We only give here a quick overview of this theory, and refer to \cite{arrow} for precisions and further details.
Roughly speaking, a w-arrow 
for a diagram $L$ is an oriented
interval, possibly decorated by a dot, immersed in the plane so
that the endpoints lie on $L$; one can perform \emph{surgery} on $L$
along this w-arrow 
to obtain a new diagram as follows:
\[
\dessin{1.5cm}{Surgery_1}\leadsto\dessin{1.5cm}{Surgery_2}
\hspace{1cm}
\dessin{1.5cm}{Surgery_3}\leadsto\dessin{1.5cm}{Surgery_4}.
\]

More generally, one defines 
w-trees, which are oriented unitrivalent trees defined recursively by
the rules:
\[
\dessin{1.5cm}{wTree_1}:=\dessin{1.5cm}{wTree_2}
\hspace{1cm}
\dessin{1.5cm}{wTree_3}:=\dessin{1.5cm}{wTree_4}.
\]
There, the dotted parts represent `parallel' subtrees, see \cite[Conv.~5.1]{arrow}. 
Any welded diagram can be represented as a diagram without any crossing, but  with w-trees. 
For example, the links $H_1$ and $H_2$ can be represented in this way as the
closures of the diagrams given in Figure \ref{fig:ugh2} (ignoring the integer labels).
Note that the presentation for $H_2$ only differs from that for $H_1$
by the addition of a union $Y$ of Y--shaped w-trees. 

\begin{figure}
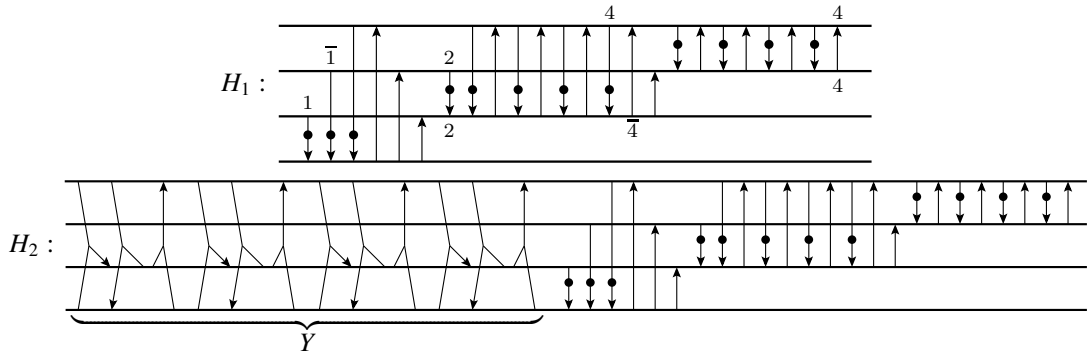

  \begin{gather*}
    H_1:\dessin{2.2cm}{GH1}\\
    H_2:
    \makebox[0cm]{$\hspace{6.46cm}\underbrace{\raisebox{-.8cm}{\hspace{6.2cm}}}_{\hbox{$Y$}}$}
    \dessin{1.8cm}{GH2}
  \end{gather*}
\caption{Arrow presentations of Hughes' links $H_1$ and $H_2$}
  \label{fig:ugh2}
\end{figure}

It is shown in \cite[Sections 4--5]{arrow} that two dots on a same edge do cancel,
and that the following moves can be performed on w-trees:
\[
\dessin{1.23cm}{HeadReversal_1}\leftrightarrow\dessin{1.23cm}{HeadReversal_2}
\hspace{1cm}  
\dessin{1.23cm}{TailReversal_1}\leftrightarrow\dessin{1.23cm}{TailReversal_2}
\hspace{1cm}  
\dessin{1.36cm}{AntiSym_1}\leftrightarrow\dessin{1.36cm}{AntiSym_2}
\]
\[
\dessin{1.23cm}{wMove_5}\leftrightarrow\dessin{1.23cm}{wMove_6}
\hspace{1cm}  
\dessin{1.23cm}{wMove_1}\leftrightarrow\dessin{1.23cm}{wMove_2}
\hspace{1cm}
\dessin{1.23cm}{wMove_7}\leftrightarrow\dessin{1.23cm}{wMove_8}.
  \]
Moreover, up to $\sv$-equivalence, it is shown in \cite[Section 9]{arrow}  that 
so-called \emph{repeated} w-trees having at least two endpoints on a same connected component can be removed, 
and that the following moves can also be performed:
\[
\dessin{1.23cm}{DotMove_1}\leftrightarrow\dessin{1.23cm}{DotMove_2}
\hspace{1cm}
  \dessin{1.23cm}{wMove_3}\leftrightarrow\dessin{1.23cm}{wMove_4}.
  \]
Then, one can start with the diagram for $H_1$ given in Figure \ref{fig:ugh2}, 
pick some w-arrow endpoint, and slide it all around the component it is attached to, in either direction: 
by the above moves, this will create $Y$-shaped w-trees, which can in turn be `gathered' at the cost of higher order w-trees. 
By performing the appropriate sequence of slides and cancelling inverse pairs and repeated w-trees, one can create
the union $Y$ of w-trees realizing the presentation for $H_2$,  
thus showing that the two links $H_1$ and $H_2$ are indeed $\sv$-equivalent. 
Such an appropriate sequence of slides is indicated in the upper part
of Figure \ref{fig:ugh2}: there, an integer label $k$ (resp. $\overline
k$) near an arrow
end indicates $k$ full turn in the left (resp. right)
direction.

\bibliographystyle{abbrv}
\bibliography{KnottedSurfaces}

\end{document}